\newtheorem{theorem}{Theorem}[section]
\newtheorem{lemma}[theorem]{Lemma}
\newtheorem{corollary}[theorem]{Corollary}
\theoremstyle{definition}
\theoremstyle{remark}
\numberwithin{equation}{section}
\DeclareMathOperator{\diver}{\mathrm{div}}
\DeclareMathOperator{\dist}{\mathrm{dist}}
\newcommand{\RR}{\mathbb{R}}
\newcommand{\Om}{\Omega}
\newcommand{\si}{\sigma}
\newcommand{\pa}{\partial}
\newcommand{\ep}{\varepsilon}
\def\bysame{\leavevmode\hbox to3em{\hrulefill}\thinspace}
\begin{document}

\title{A weak comparison principle for solutions of very degenerate elliptic equations}

\author{Giulio Ciraolo\thanks{Department of Mathematics and Informatics, Universit\`a di Palermo, Via Archirafi 34, 90123, Italy. E-mail: {\tt g.ciraolo@math.unipa.it }}}

%\subjclass[2010]{Primary 35B51}

\date{\today}

\maketitle

\begin{abstract}
We prove a comparison principle for weak solutions of elliptic quasilinear equations in divergence form whose ellipticity constants degenerate at every point where $\nabla u \in K$, where $K\subset \RR^N$ is a Borel set containing the origin.
\end{abstract}

\section{Introduction}
Let $K\subset \RR^N$, $N\geq 2$, be a Borel set containing the origin $O$. We consider a vector function $A: \RR^N \to \RR^N$, $A \in L_{loc}^\infty(\RR^N)$, such that
\begin{equation}\label{A def}
\begin{cases}
A (\xi) = 0, &  \textmd{ if } \xi \in K,  \\
[A(\xi) - A(\eta)]\cdot (\xi - \eta) > 0 ,\ \ \forall\: \eta \in \RR^N \setminus \{\xi\}, &  \textmd{ if } \xi \not\in K,
\end{cases}
\end{equation}
where $\cdot$ denotes the scalar product in $\RR^N$. In this note we prove a comparison principle for Lipschitz weak solutions of
\begin{equation} \label{Euler eq 1}
\begin{cases}
- \diver A(\nabla u) = g, & \textmd{in } \Om,\\
u = \psi, & \textmd{on } \pa \Om,
\end{cases}
\end{equation}
where $\Om$ is a bounded domain in $\RR^N$, $\psi \in W^{1,\infty}(\Om)$ and $g\in L^1(\Omega)$.
As usual, $u \in W^{1,\infty}(\Om)$ is a weak solution of \eqref{Euler eq 1} if $u-\psi \in W_0^{1,\infty}(\Om)$ and $u$ satisfies
\begin{equation}\label{Euler eq 2}
\int_\Omega A(\nabla u) \cdot \nabla \phi dx = \int_\Omega g \phi dx, \quad \textmd{for every } \phi\in C_0^1(\Omega).
\end{equation}
For weak comparison principle  we mean the following: if $u_1,u_2$ are two solutions of \eqref{Euler eq 2} with $u_1\leq u_2$ on $\pa \Om$, then $u_1\leq u_2$ in $\overline{\Om}$. Clearly, the weak comparison principle implies the uniqueness of the solution.

It is well known that if $K$ is the singleton $\{O\}$, then \eqref{A def} guarantees the validity of the weak comparison principle (see for instance \cite{GT} and \cite{To}). For this reason, from now on $K$ will be a set containing the origin and at least another point of $\RR^N$.

Our interest in this kind of equations comes from recent studies in traffic congestion problems (see \cite{BC} and \cite{BCS}), complex-valued solutions of the {\it eikonal} equation (see \cite{MT1}--\cite{MT4}) and in variational problems which are relaxations of non-convex ones (see for instance \cite{CarMul} and \cite{FFM}).

As an example, we can think to $f:[0,+\infty) \to [0,+\infty)$ given by
\begin{equation}\label{f examp 1}
f(s)=\frac{1}{p}(s-1)_+^p,
\end{equation}
where $p>1$ and $(\cdot)_+$ stands for the positive part, and consider the functional
\begin{equation}\label{I(u)}
I(u)= \int_\Om [f(|\nabla u(x)|)-g(x) u(x)]dx, \quad u\in \psi + W_0^{1,\infty}(\Om).
\end{equation}
As it is well-known, \eqref{Euler eq 2} is the Euler-Lagrange equation associated to \eqref{I(u)} with $A$ given by
\begin{equation}\label{A grad f}
A(\nabla u) = \frac{f'(|\nabla u|)}{|\nabla u|} \nabla u,
\end{equation}
and it is easy to verify that $A$ satisfies \eqref{A def} with $K=\{ \xi \in \RR^N:\ |\xi|\leq 1\}$. It is clear that in this case the monotonicity condition in \eqref{A def} can be read in terms of the convexity of $f$. Indeed, $f$ is not strictly convex in $[0,+\infty)$ since it vanishes in $[0,1]$; however, if $s_1>1$ then
\begin{equation*}
f((1-t)s_0+ts_1) < (1-t)f(s_0)+tf(s_1),  \quad t\in [0,1],
\end{equation*}
for any  $s_0 \in [0,+\infty)$ and $s_0 \neq s_1$: the convexity holds in the strict sense whenever a value greater than $1$ is considered.

Coming back to our original problem we notice that, since $A$ vanishes in $K$, \eqref{Euler eq 1} is strongly degenerate and no more than Lipschitz regularity of the solution can be expected. It is clear that if $g=0$, then every function with gradient in $K$ will satisfy the equation. Besides the papers cited before, we mention \cite{Br,CCG,EMT,SV} where regularity issues were tackled and \cite{Ci} where it is proven that solutions to \eqref{Euler eq 1} satisfy an obstacle problem for the gradient in the viscosity sense. Here, we will not specify the assumptions on $A$ and $g$ that guarantee the existence of a Lipschitz solution and we refer to the mentioned papers for this interesting issue.

We stress that some regularity may be expected if we look at $A(\nabla u)$. In \cite{BCS} and \cite{CarMul} the authors prove some Sobolev regularity results for $A(\nabla u)$ under more restrictive assumptions on $A$ and $g$. We also mention that results on the continuity of $A(\nabla u)$ can be found in \cite{CF} and \cite{SV}.

In Section \ref{section 2}, we prove a weak comparison principle for Lipschitz solutions of \eqref{Euler eq 2} by assuming the following: (i) one of the two solutions satisfies a Sobolev regularity assumption on $A(\nabla u)$; (ii) the Lebesgue measure of the set where $g$ vanishes is zero. As we shall prove, the former guarantees that the set where $\nabla u \in K$ and $g$ does not vanish has measure zero. The latter seems to be optimal for proving our result. Indeed, if we assume that $g=0$, then any Lipschitz function with gradient in $K$ would be a solution and we can not have a comparison between any two of such solutions. For instance, if we consider $A$ as in \eqref{A grad f} with $f$ given by \eqref{f examp 1}, then a simple example of functions that satisfy \eqref{Euler eq 1} is given by $u_\si(x)=\si \dist(x,\pa \Om)$, with $\si \in [-1,1]$. Since every $u_\si=0$ on $\pa \Om$, \eqref{Euler eq 1} does not have a unique solution and a comparison principle can not hold. Generally speaking, any region where $g$ vanishes will be source of problems for proving a comparison principle. We mention that, for $A$ as in \eqref{A grad f} and $g=1$, a comparison principle for minimizers of \eqref{I(u)} was proven in \cite{CMS}.

\section{Main result} \label{section 2}
Before proving our main result, we need the following lemma which generalizes a result obtained in \cite{Lo} for the p-Laplacian. In what follows, $|D|$ denotes the Lebesgue measure of a set $D \subset \RR^N$.

\begin{lemma} \label{lemma 1}
Let $u \in W^{1,\infty}(\Omega)$ be a solution of \eqref{Euler eq 2}, with $A$ satisfying \eqref{A def} and let
%\begin{equation}\label{E def}
%E=\{x\in \Omega: \ \nabla u(x) \not \in K \}
%\end{equation}
%and
\begin{equation}\label{Z def}
Z=\{x\in \Omega: \ \nabla u(x)  \in K \}.
\end{equation}
If $A(\nabla u) \in W^{1,p}(\Om)$ for some $p\geq 1$, then
\begin{equation} \label{Z minus G0}
|Z \setminus G_0|=0,
\end{equation}
where
\begin{equation}\label{G0}
G_0=\{x\in \Om:\ g(x)=0\}.
\end{equation}
In particular, if $|G_0| =0$ then $|Z|=0$.
\end{lemma}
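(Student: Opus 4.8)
The strategy is to turn the Sobolev regularity of $A(\nabla u)$ into the pointwise identity $-\diver A(\nabla u)=g$ a.e.\ in $\Omega$, and then to show that $\diver A(\nabla u)$ vanishes a.e.\ on $Z$, which forces $g=0$ a.e.\ there. First I would note that \eqref{A def} in fact gives $Z=\{x\in\Omega:\ A(\nabla u(x))=0\}$ up to a null set: since $O\in K$ we have $A(O)=0$, and testing the strict monotonicity inequality in \eqref{A def} with $\eta=O$ yields $A(\xi)\cdot\xi>0$, hence $A(\xi)\neq0$, for every $\xi\notin K$; conversely $A$ vanishes identically on $K$.

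Next, since $\Omega$ is bounded, $A(\nabla u)\in W^{1,p}(\Omega)\subset W^{1,1}(\Omega)$, so for every $\phi\in C_0^1(\Omega)$ the definition of the weak derivative, applied componentwise and summed over the components, gives
\[
\int_\Omega A(\nabla u)\cdot\nabla\phi\,dx = -\int_\Omega \phi\,\diver A(\nabla u)\,dx .
\]
Comparing this with \eqref{Euler eq 2} and letting $\phi$ range over $C_0^1(\Omega)$, we obtain $-\diver A(\nabla u)=g$ a.e.\ in $\Omega$. Then I would invoke the classical fact that a $W^{1,1}_{loc}$ function has vanishing weak gradient almost everywhere on the set where it equals a fixed constant — this is the ingredient behind the $p$-Laplacian result of \cite{Lo}, and it can be found, e.g., in \cite{GT}. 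Applying it to each component of $A(\nabla u)$ on the set $\{A(\nabla u)=0\}=Z$ gives $\nabla (A(\nabla u))_i=0$ a.e.\ on $Z$ for every $i$, and in particular $\diver A(\nabla u)=0$ a.e.\ on $Z$. Combined with the identity above, this yields $g=0$ a.e.\ on $Z$, that is $|Z\setminus G_0|=0$. The final assertion is then immediate, since $|Z|=|Z\setminus G_0|+|Z\cap G_0|\le 0+|G_0|$.

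The only delicate point is the correct use of this ``zero gradient on a level set'' property: one has to check that its hypotheses are met (here merely $A(\nabla u)\in W^{1,1}_{loc}(\Omega)$, which holds for any $p\ge1$ since $\Omega$ is bounded) and to apply it to the vector-valued map $A(\nabla u)$ componentwise, rather than to $u$ itself — for which no better than Lipschitz regularity is available. The integration by parts and the measure-theoretic bookkeeping are routine.
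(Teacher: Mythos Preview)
Your argument is correct and in fact takes a cleaner route than the paper's. The paper does not pass to the pointwise equation $-\diver A(\nabla u)=g$ a.e.; instead it multiplies the weak formulation by the cutoff $\dfrac{|A(\nabla u)|}{\ep+|A(\nabla u)|}$, bounds the cross term via Cauchy--Schwarz and $\nabla|A(\nabla u)|\in L^p$, and lets $\ep\to0$ to deduce $\int_\Omega A(\nabla u)\cdot\nabla\psi=\int_{\Omega\setminus Z}g\psi$ for all $\psi\in C_0^1(\Omega)$, hence $g=0$ a.e.\ on $Z$. Your approach replaces this limiting procedure by two standard facts: integration by parts (available because $A(\nabla u)\in W^{1,1}(\Omega)$) to get $-\diver A(\nabla u)=g$ a.e., and the Stampacchia-type level-set property $\nabla f=0$ a.e.\ on $\{f=c\}$ applied componentwise to $A(\nabla u)$ on $Z=\{A(\nabla u)=0\}$. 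What you gain is brevity and transparency: the Sobolev assumption is used exactly where one expects, and no $\ep$-approximation is needed. The paper's computation, on the other hand, is self-contained in the sense that it does not invoke the level-set lemma as a black box---the cutoff essentially reproves that lemma in this specific situation. One minor remark: your identification $Z=\{A(\nabla u)=0\}$ is in fact a pointwise equality wherever $\nabla u$ is defined, not merely ``up to a null set''; the qualifier is harmless but unnecessary.
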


\begin{proof}
Since $A(\nabla u) \in W^{1,p}(\Om)$, then the function
\begin{equation*}
\frac{|A(\nabla u)|}{\ep + |A(\nabla u)|} \in W^{1,p}(\Om),
\end{equation*}
for any $\ep >0$. Let $\psi \in C_0^1 (\Omega) $, set
\begin{equation*}
\phi(x)=\frac{|A(\nabla u(x))|}{\ep + |A(\nabla u(x))|} \psi (x),
\end{equation*}
and notice that $\phi \in L^{\infty}(\Omega) \cap W_0^{1,p}(\Omega)$. Since $u$ is Lipschitz continuous and $A \in L^{\infty}_{loc}(\RR^N)$, we have that $A(\nabla u) \in L^{\infty}(\Omega)$. Hence, by an approximation argument, $\phi$ can be used as a test function in \eqref{Euler eq 2}, yielding
\begin{multline} \label{eq1 in prop}
\int_\Omega \frac{|A(\nabla u)|}{\ep + |A(\nabla u)|} A(\nabla u) \cdot \nabla \psi dx + \ep \int_\Omega  \psi \frac{A(\nabla u) \cdot \nabla |A(\nabla u)|}{(\ep + |A(\nabla u)|)^2} dx = \\ = \int_\Omega \frac{|A(\nabla u)|}{\ep + |A(\nabla u)|} \psi g dx.
\end{multline}
It is clear that
\begin{equation} \label{eq2 in prop}
\int_\Omega \frac{|A(\nabla u)|}{\ep + |A(\nabla u)|} \psi g dx = \int_{\Omega \setminus Z} \frac{|A(\nabla u)|}{\ep + |A(\nabla u)|} \psi g dx,
\end{equation}
and that Cauchy-Schwarz inequality yields
\begin{equation} \label{eq3 in prop}
\Big{|}\ep \frac{A(\nabla u) \cdot \nabla |A(\nabla u)|}{(\ep + |A(\nabla u)|)^2} \Big{|} \leq |\nabla ( |A(\nabla u)|) |
\end{equation}
uniformly for $\ep>0$. Since $\nabla ( |A(\nabla u)|) \in L^p(\Om)$, from \eqref{eq1 in prop}--\eqref{eq3 in prop} and by letting $\ep$ go to zero, we obtain from Lebesgue's dominated convergence Theorem that
\begin{equation*}
\int_\Omega A(\nabla u) \cdot \nabla \psi dx = \int_{\Omega \setminus Z} g \psi dx,
\end{equation*}
for any $\psi \in C_0^1(\Omega)$. From \eqref{Euler eq 2} we have
\begin{equation*}
\int_\Omega g \psi dx = \int_{\Omega\setminus Z} g \psi dx \quad \textmd{ for any } \psi \in C_0^1(\Omega),
\end{equation*}
that is
\begin{equation*}
g(x)=0  \  \textmd{ for almost every }\  x \in Z,
\end{equation*}
which implies \eqref{Z minus G0}. \qed
\end{proof}

Our main result is the following.

\begin{theorem}\label{th comp}
Let $u_j \in W^{1,\infty}(\Omega),\: j=1,2,$ be two solutions of \eqref{Euler eq 2}, with $A$ satisfying \eqref{A def} and $g$ such that $|G_0|=0$, with $G_0$ given by \eqref{G0}. Furthermore, let us assume that $A(\nabla u_j) \in W^{1,p}(\Omega)$ for some $p\geq 1$ and $j \in \{1,2\}$.

If $u_1\leq u_2$ on $\pa \Om$ then $u_1 \leq u_2$ in $\overline{\Om}$.
\end{theorem}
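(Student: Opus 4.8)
The plan is to use the standard test function $\phi = (u_1 - u_2)_+$ in the weak formulation for both solutions and subtract, exploiting the monotonicity in \eqref{A def} together with Lemma \ref{lemma 1} to handle the degeneracy set. Since $u_1 \le u_2$ on $\partial\Omega$ and both solutions lie in $W^{1,\infty}(\Omega)$, the function $w := (u_1 - u_2)_+$ belongs to $W_0^{1,\infty}(\Omega)$, so by approximation it is an admissible test function in \eqref{Euler eq 2}. Testing the equation for $u_1$ and for $u_2$ with $\phi = w$ and subtracting gives
\begin{equation*}
\int_\Omega \bigl[ A(\nabla u_1) - A(\nabla u_2) \bigr] \cdot \nabla w \, dx = 0 .
\end{equation*}
On the set $P := \{ w > 0 \} = \{ u_1 > u_2 \}$ we have $\nabla w = \nabla u_1 - \nabla u_2$ a.e., and $\nabla w = 0$ a.e. off $P$, so the integral reduces to $\int_P [A(\nabla u_1) - A(\nabla u_2)] \cdot (\nabla u_1 - \nabla u_2)\, dx = 0$. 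By \eqref{A def} the integrand is nonnegative (it is strictly positive wherever $\nabla u_1 \neq \nabla u_2$ and at least one of $\nabla u_1, \nabla u_2$ is not in $K$, and it equals zero when both gradients lie in $K$, since then both $A$-values vanish), hence the integrand vanishes a.e. on $P$. Consequently, for a.e. $x \in P$, either $\nabla u_1(x) = \nabla u_2(x)$, or both $\nabla u_1(x) \in K$ and $\nabla u_2(x) \in K$.

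Now I would bring in Lemma \ref{lemma 1} and the hypothesis $|G_0| = 0$. Suppose, for contradiction, that $P$ has positive measure. Split $P = P_1 \cup P_2$ where $P_1 = \{ x \in P : \nabla u_1(x) = \nabla u_2(x) \}$ and $P_2 = P \setminus P_1 \subseteq \{ \nabla u_1 \in K \} \cap \{ \nabla u_2 \in K \} = Z_1 \cap Z_2$ (up to a null set), with $Z_j := \{ \nabla u_j \in K \}$. By Lemma \ref{lemma 1} applied to the solution $u_j$ whose flux satisfies the Sobolev hypothesis, we get $|Z_j \setminus G_0| = 0$, and since $|G_0| = 0$ this gives $|Z_j| = 0$; in particular $|P_2| = 0$. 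Therefore $|P_1| = |P| > 0$, i.e. $\nabla u_1 = \nabla u_2$ a.e. on the open-in-measure set $P$. On $P_1$ we then have $\nabla w = \nabla u_1 - \nabla u_2 = 0$ a.e., and since $\nabla w = 0$ a.e. off $P$ as well, $w$ has zero gradient a.e. in $\Omega$; as $w \in W_0^{1,\infty}(\Omega)$ this forces $w \equiv 0$, contradicting $|P| > 0$. Hence $|P| = 0$, so $u_1 \le u_2$ a.e. in $\Omega$, and by continuity $u_1 \le u_2$ in $\overline{\Omega}$.

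The main subtlety — and the reason Lemma \ref{lemma 1} and the assumption $|G_0| = 0$ are needed — is precisely the step where the monotonicity inequality degenerates: on the set where both gradients lie in $K$ the inequality $[A(\nabla u_1) - A(\nabla u_2)]\cdot(\nabla u_1 - \nabla u_2) \ge 0$ carries no information, because both sides are zero regardless of whether $\nabla u_1 = \nabla u_2$. Without ruling out this set one cannot conclude $\nabla w = 0$. The Sobolev regularity of $A(\nabla u_j)$ feeds into Lemma \ref{lemma 1} to show $g = 0$ a.e. on $Z_j$, and then $|G_0| = 0$ collapses $Z_j$ to a null set, which is exactly what closes the argument. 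A minor point to check carefully is the admissibility of $w = (u_1-u_2)_+$ as a test function: $C_0^1(\Omega)$ is the class in \eqref{Euler eq 2}, but since $A(\nabla u_j) \in L^\infty(\Omega)$ and $w \in W_0^{1,\infty}(\Omega)$, a routine density/truncation argument extends \eqref{Euler eq 2} to all $\phi \in W_0^{1,\infty}(\Omega)$, which suffices.
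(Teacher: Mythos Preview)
Your proof is correct and follows essentially the same approach as the paper: test with $(u_1-u_2)_+$, subtract the two weak formulations, invoke Lemma~\ref{lemma 1} together with $|G_0|=0$ to make the degeneracy set $\{\nabla u_j\in K\}$ null, and conclude $\nabla u_1=\nabla u_2$ a.e.\ on $\{u_1>u_2\}$. The only cosmetic difference is the final step: the paper restricts to a connected component $U$ and uses $u_1=u_2$ on $\partial U$, whereas you argue directly that $\nabla w=0$ a.e.\ in $\Omega$ forces $w\equiv 0$ since $w\in W_0^{1,\infty}(\Omega)$.
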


\begin{proof}
We proceed by contradiction. Let us assume that $U=\{x \in \Om:\ u_1>u_2\}$ is nonempty. Since $u_1$ and $u_2$ are continuous, then $U$ is open and we can assume that it is connected (otherwise we repeat the argument for each connected component). Without loss of generality, we can assume that $A(\nabla u_1) \in W^{1,p}(\Omega)$ and we define $E_1=\{ x \in \Om :\ \nabla u_1 \not \in K \}$.

Let $\phi = (u_1-u_2)_+$. Since $u_1 \leq u_2$ on $\pa \Om$, then $\phi \in W_0^{1,\infty}(\Om)$ and \eqref{Euler eq 2} yields:
\begin{equation*}
\int_U A(\nabla u_j) \cdot \nabla (u_1-u_2) dx = \int_U g (u_1-u_2) dx,\quad j=1,2.
\end{equation*}
By subtracting the two identities, we have
\begin{equation}\label{eq1 thcomp}
\int_U \left[ A(\nabla u_1) - A(\nabla u_2) \right] \cdot (\nabla u_1- \nabla u_2) dx = 0.
\end{equation}
We notice that Lemma \ref{lemma 1} yields $|\{\nabla u_1 \in K\}|=0$ and thus
\begin{multline*}
\int_U \left[ A(\nabla u_1) - A(\nabla u_2) \right] \cdot (\nabla u_1- \nabla u_2) dx = \\ = \int_{U\cap E_1 } \left[ A(\nabla u_1) - A(\nabla u_2) \right] \cdot (\nabla u_1- \nabla u_2) dx;
\end{multline*}
\eqref{eq1 thcomp} and the monotonicity condition in \eqref{A def} imply that
\begin{equation} \label{eq2 th comp}
\nabla u_1=\nabla u_2 \textmd{ a.e. in } U \cap E_1.
\end{equation}
Since $|\{\nabla u_1 \in K\}|=0$, we obtain that $\nabla u_1=\nabla u_2$ a.e. in $U$. Being $u_1=u_2$ on $\pa U$, we have that $u_1=u_2$ in $U$, which gives a contradiction. \qed
\end{proof}

It is clear that Theorem \ref{th comp} implies the uniqueness of a solution for \eqref{Euler eq 1}. Moreover, from Theorem \ref{th comp}, we also obtain the following comparison principle.

\begin{corollary}
Let $u_j,\: j=1,2,\ A$ and $g$ be as in Theorem \ref{th comp}. If $u_1 < u_2$ on $\pa \Om$ then $u_1 < u_2$ in $\overline{\Om}$.
\end{corollary}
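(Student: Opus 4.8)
The plan is to deduce the corollary directly from Theorem~\ref{th comp} together with the elementary observation that the strict inequality on the boundary is an open/compactness condition that cannot degenerate in the interior. First I would apply Theorem~\ref{th comp} itself: since $u_1 < u_2$ on $\pa\Om$ in particular gives $u_1 \le u_2$ on $\pa\Om$, the theorem yields $u_1 \le u_2$ in $\overline{\Om}$. So the only thing left to rule out is that $u_1(x_0) = u_2(x_0)$ at some interior point $x_0 \in \Om$; equivalently, I must show the set $V = \{x \in \Om : u_1(x) = u_2(x)\}$ does not meet the interior, or at least produce a contradiction if it does.

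The main step is to exploit that when $u_1 = u_2$ somewhere in $\Om$ while $u_1 \le u_2$ everywhere, the difference $w = u_2 - u_1 \ge 0$ attains an interior minimum value $0$. I would consider a connected component of the open set $\{w > 0\}$ (which is nonempty and has nonempty boundary inside $\Om$ if $V$ meets $\Om$), or more directly I would mimic the contradiction argument in the proof of Theorem~\ref{th comp}: set $\widetilde U = \{x \in \Om : w(x) < \delta\}$ for small $\delta > 0$. Because $u_1 < u_2$ strictly on the compact set $\pa\Om$ and $w$ is continuous, for $\delta$ small enough $\overline{\widetilde U} \subset \Om$, so $w - \delta$ vanishes on $\pa\widetilde U$ and $(w-\delta)_- = (\delta - w)_+$ is an admissible $W^{1,\infty}_0(\widetilde U)$ test function. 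Plugging $\phi = (\delta - w)_+$ into \eqref{Euler eq 2} for $u_1$ and $u_2$, subtracting, and using the monotonicity in \eqref{A def} exactly as in the theorem's proof gives $\nabla u_1 = \nabla u_2$ a.e. on $\widetilde U \cap E_1$; combined with Lemma~\ref{lemma 1} ($|\{\nabla u_1 \in K\}| = 0$) this forces $\nabla u_1 = \nabla u_2$ a.e. in $\widetilde U$, hence $w$ is constant on each connected component of $\widetilde U$. A component touching $\pa\widetilde U = \{w = \delta\}$ then has $w \equiv \delta$ on it, contradicting that $\widetilde U = \{w < \delta\}$; and since $\{w = 0\} \cap \Om \ne \emptyset$ forces some component of $\{w < \delta\}$ to be nonempty and to have boundary points where $w = \delta$, we reach the desired contradiction.

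I expect the main obstacle to be the bookkeeping around which component of $\{w < \delta\}$ to use and ensuring its closure stays inside $\Om$: one needs $\delta < \min_{\pa\Om}(u_2 - u_1)$, which is strictly positive by continuity of $u_2 - u_1$ and compactness of $\pa\Om$, so that the level set $\{w < \delta\}$ is compactly contained in $\Om$ and the test function is genuinely admissible. A slightly cleaner alternative, which I would mention as a remark, is to bypass level sets entirely: apply Theorem~\ref{th comp} to the pair $u_1$ and $u_2$ to get $u_1 \le u_2$, and then note that if equality held at an interior point we could translate — for small $c > 0$ the function $u_2 - c$ still satisfies $u_1 < u_2 - c$ on $\pa\Om$ wherever $\delta$ is large, but this does not quite work since $u_2 - c$ need not solve the equation. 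So the level-set/test-function route above is the robust one, and the only real care needed is the compact-containment of $\overline{\{w<\delta\}}$ in $\Om$.
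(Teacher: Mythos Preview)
Your level-set argument is correct: choosing $0<\delta < \min_{\pa\Om}(u_2 - u_1)$ makes $(\delta - w)_+$ a compactly supported Lipschitz test function, and the contradiction follows just as you describe. However, you dismissed the translation shortcut for the wrong reason. The weak formulation \eqref{Euler eq 2} depends only on $\nabla u$, not on $u$ itself, so if $u_1$ solves it then so does $u_1 + c$ for every constant $c$; the same applies to $u_2 - c$. This is exactly the route the paper takes: by compactness of $\pa\Om$ and continuity of $u_1,u_2$ on $\overline\Om$, there is $c>0$ with $u_1 + c \le u_2$ on $\pa\Om$; since $u_1 + c$ is still a solution, Theorem~\ref{th comp} gives $u_1 + c \le u_2$ on $\overline\Om$, hence $u_1 < u_2$ there. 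Three lines instead of a page. Your more elaborate argument has the merit of not relying on translation invariance --- it would survive an equation with a zero-order term in $u$ --- but for the problem at hand the paper's proof is the natural one, and your claim that ``$u_2 - c$ need not solve the equation'' is simply mistaken.
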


\begin{proof}
Since $\pa \Omega$ is compact and $u_1$ and $u_2$ are continuous in $\overline{\Omega}$, there exists a constant $c>0$ such that $u_1+c\leq u_2$ on $\pa \Om$. Being $u_1+c$ a solution of \eqref{Euler eq 2}, Theorem \ref{th comp} yields $u_1+c \leq u_2$ in $\overline{\Om}$ and, since $c$ is positive, we conclude. \qed
\end{proof}

\end{document}